\newcommand*\bigcdot{\mathpalette\bigcdot@{.5}}
\newcommand*\bigcdot@[2]{\mathbin{\vcenter{\hbox{\scalebox{#2}{$\m@th#1\bullet$}}}}}
\title{A tree-based radial basis function method for noisy parallel surrogate optimization}
\author[1]{Chenchao Shou\thanks{cshou3@illinois.edu}}
\author[1]{Matthew West\thanks{mwest@illinois.edu}}
\affil[1]{Department of Mechanical Science and Engineering, University of Illinois at Urbana-Champaign}
\date{}
\begin{document}

\newtheorem{defn}{Definition}[section]

\newtheorem{thm}{Theorem}[subsection]
\renewcommand{\thethm}{\arabic{thm}}

\newtheorem{lem}{Lemma}[section]
\newtheorem{col}{Corollary}[section]
\newtheorem{assumption}{Assumption}[section]
\newtheorem*{remark}{Remark}
\newtheorem*{note}{Note}
\newtheorem{assumpa}{Assumption}
\renewcommand\theassumpa{A.\arabic{assumpa}}
\newtheorem{assumpb}{Assumption}
\renewcommand\theassumpb{B.\arabic{assumpb}}

\maketitle

\begin{abstract}
Parallel surrogate optimization algorithms have proven to be efficient methods for solving expensive noisy optimization problems. In this work we develop a new parallel surrogate optimization algorithm (ProSRS), using a novel tree-based ``zoom strategy'' to improve the efficiency of the algorithm. We prove that if ProSRS is run for sufficiently long, with probability converging to one there will be at least one point among all the evaluations that will be arbitrarily close to the global minimum. We compare our algorithm to several state-of-the-art Bayesian optimization algorithms on a suite of standard benchmark functions and two real machine learning hyperparameter-tuning problems. We find that our algorithm not only achieves significantly faster optimization convergence, but is also 1--4 orders of magnitude cheaper in computational cost.
\end{abstract}

\section{Introduction}


We consider a general global noisy optimization problem:
\begin{equation}\label{eq:optim_prob}
\begin{gathered}
\text{minimize}\quad F(x), \quad \text{where } F(x) \coloneqq \mathbb{E}_\omega[f(x,\omega)],\\
\text{s.t.}\quad x\in \mathcal{D} = [a_1,b_1]\times [a_2,b_2]\times\ldots\times [a_d,b_d] \subseteq \mathbb{R}^d,
\end{gathered}
\end{equation}
where $f$ is an expensive black-box function, $\omega$ captures noise (randomness) in the function evaluation and the dimension $d$ is low to medium (up to tens of dimensions). We assume that only noisy evaluations $f$ are observed and the underlying objective function $F$ is unknown. The problem in Eq.~\ref{eq:optim_prob} is a standard optimization problem \citep{Amaran2016,rakshit2016noisy} that appears in many applications including operations \citep{kochel2005simulation}, engineering designs \citep{prakash2008design}, biology \citep{xie2012optimization,romero2013navigating}, transportation \citep{chen2014surrogate,osorio2010simulation} and machine learning \citep{snoek2012practical,wu2016parallel}.

Another problem relevant to Eq.~\ref{eq:optim_prob}  is a stochastic bandit with infinitely many arms \citep{wang2009algorithms,bubeck2011x,carpentier2015simple,li2017infinitely}. In this type of problem, the goal is to find the optimal strategy within a continuous space so that the expected cumulative reward is maximized. Compared to the problem considered in this work, the objective function in a bandit problem is typically not expensive. As a result, the solution strategies for these two types of problems are generally different.

Surrogate-based optimization algorithms are often used to solve the expensive problem in Eq.~\ref{eq:optim_prob} \citep{Amaran2016}. Because the algorithm exploits the shape of the underlying objective function, surrogate-based optimization can often make good progress towards the optimum using relatively few function evaluations compared to derivative-free algorithms such as Nelder-Mead and Direct Search algorithms \citep{conn1997recent,regis2007stochastic}. Generally speaking, this method works as follows: in each iteration, a surrogate function that approximates the objective $F$ is first constructed using available evaluations, and then a new set of point(s) is carefully proposed for the next iteration based on the surrogate. Because the function $f$ is expensive, spending extra computation in determining which points to evaluate is often worthwhile.

Within the family of surrogate-based optimization methods, parallel surrogate optimization algorithms propose multiple points in each iteration, and the expensive evaluations of these points are performed in parallel \citep{haftka2016parallel}. Compared to the serial counterpart, parallel surrogate optimization uses the parallel computing resources more efficiently, thereby achieving better progress per unit wall time.

A popular method for noisy parallel surrogate optimization is Bayesian optimization \citep{snoek2012practical,contal2013parallel,desautels2014parallelizing,shah2015parallel,azimi2010batch,gonzalez2016batch,wu2016parallel}. Bayesian optimization typically works by assuming a Gaussian process prior over the objective function, constructing a Gaussian process (GP) surrogate \citep{rasmussen2006gaussian} with the evaluations, and proposing new points through optimizing an acquisition function. Common acquisition functions are expected improvement (EI) \citep{snoek2012practical}, upper confidence bound (UCB) or lower confidence bound (LCB) \citep{contal2013parallel,desautels2014parallelizing}, and information-theoretic based \citep{shah2015parallel,wu2016parallel}.

One issue with Bayesian methods is the high computational cost. Typically, training a GP surrogate requires solving a maximum likelihood problem, for which operations of complexity proportional to the cube of the number of evaluations are performed for many times \citep{quinonero2005analysis}. To propose new points, Bayesian optimization usually requires the solution of sub-optimization problems (e.g., maximizing expected improvement) with the possible use of Monte Carlo procedures \citep{snoek2012practical,azimi2010batch}. When many points are evaluated per iteration, so that the number of evaluations accumulates quickly with the number of iterations, Bayesian optimization algorithm itself can be even more expensive than the evaluation of the function $f$, and this is indeed observed in real hyperparameter-tuning problems (see Section~\ref{subsec:optim_alg_cost}).

In this work, we develop a novel algorithm called ProSRS for noisy parallel surrogate optimization. Unlike Bayesian optimization that uses a GP model, our algorithm uses a radial basis function (RBF), which is more efficient computationally. We adopt an efficient framework, known as stochastic response surface (SRS) method \citep{regis2007stochastic,regis2009parallel}, for proposing new points in each iteration. The sub-optimization problems in the SRS method are discrete minimization problems. Compared to the original parallel SRS work \citep{regis2009parallel}, our work: (1) introduces a new tree-based technique, known as the ``zoom strategy'', for efficiency improvement, (2) extends the original work to the noisy setting (i.e., an objective function corrupted with random noise) through the development of a radial basis regression procedure, (3) introduces weighting to the regression to enhance exploitation, (4) implements a new SRS combining the two types of candidate points that were originally proposed in SRS \citep{regis2007stochastic}.

We compare our algorithm to three well-established parallel Bayesian optimization algorithms. We find that our algorithm shows superior optimization performance on both benchmark problems and real hyperparameter-tuning problems, and yet its cost is orders of magnitude lower. The fact that our algorithm is significantly cheaper means that our algorithm is suitable for a wider range of optimization problems, not just very expensive ones.

The remainder of this paper is organized as follows. In Section~\ref{sect:algorithm}, we present our algorithm. In Section~\ref{sect:converge}, we show a theoretical convergence result. We then demonstrate numerical results in Section~\ref{sect:numerical} and finally conclude in Section~\ref{sect:conclusion}.

\section{The ProSRS algorithm}\label{sect:algorithm}

Conventional surrogate optimization algorithms use all the expensive function evaluations from past iterations to construct the surrogate. As the number of evaluations grows over iterations, the cost of conventional methods thus increases. Indeed, the cost can increase rather quickly with the number of iterations, especially when a large number of points are evaluated in parallel per iteration.

To overcome this limitation, we develop a novel algorithm that does not necessarily use all the past evaluations while still being able to achieve good optimization performance. The key intuition here is that once an optimal region is approximately located, progress can be made by focusing on the evaluations within this region. This idea is illustrated in Fig.~\ref{fig:zoom_diagram}, where the red curve is a surrogate built with all the evaluations. Now suppose we restrict the domain to a smaller region as indicated by the dashed black box and only fit the evaluation data within that region. We still obtain a good surrogate (blue curve) around the optimum, and it is cheaper as we are using fewer evaluations to do so. We now proceed with our optimization, treating the restricted region as our new domain and the local fit as our surrogate for optimization. This idea of recursively optimizing over hierarchical domains lies at the heart of our algorithm. In this paper, we call this technique the ``zoom strategy''. Because it requires less evaluation data to build a local surrogate than to build a global one, the zoom strategy can significantly reduce the cost of the algorithm.

For ease of describing the relationships between different domains, we introduce the notion of a node. A node consists of a domain together with all the information needed by an optimization algorithm to make progress for that domain. We call the process of restricting the domain to a smaller domain the ``zoom-in'' process, in which case the node associated with the original domain is a ``parent'' node and the node for the restricted domain is a ``child'' node. The reverse process of zooming in is referred to as the ``zoom-out'' process (i.e., the transition from a child node to its parent node). See Fig.~\ref{fig:tree_diagram} for an illustration of this structure.

\begin{figure}
\begin{center}
\includegraphics[width=0.9\textwidth]{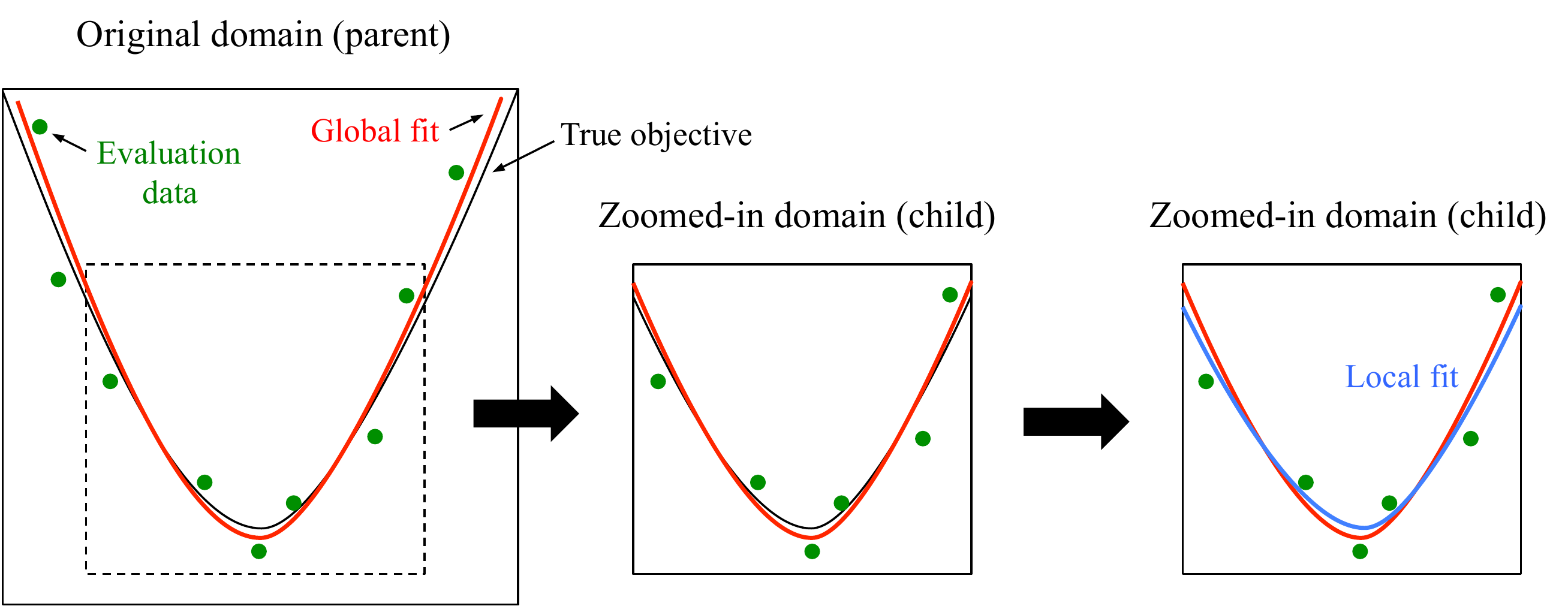}
\caption{Illustration of the zoom strategy on a 1-D parabola. The red curve shows the surrogate fit to all the noisy evaluations (green dots) of the objective function (black curve). The blue curve shows the surrogate fit using only the local evaluation data in the zoomed-in domain. The local fit is likely to agree well with the global fit on the restricted domain, and is much cheaper to construct.}
\label{fig:zoom_diagram}
\end{center}
\end{figure}

\begin{figure}
\begin{center}
\includegraphics[width=0.6\textwidth]{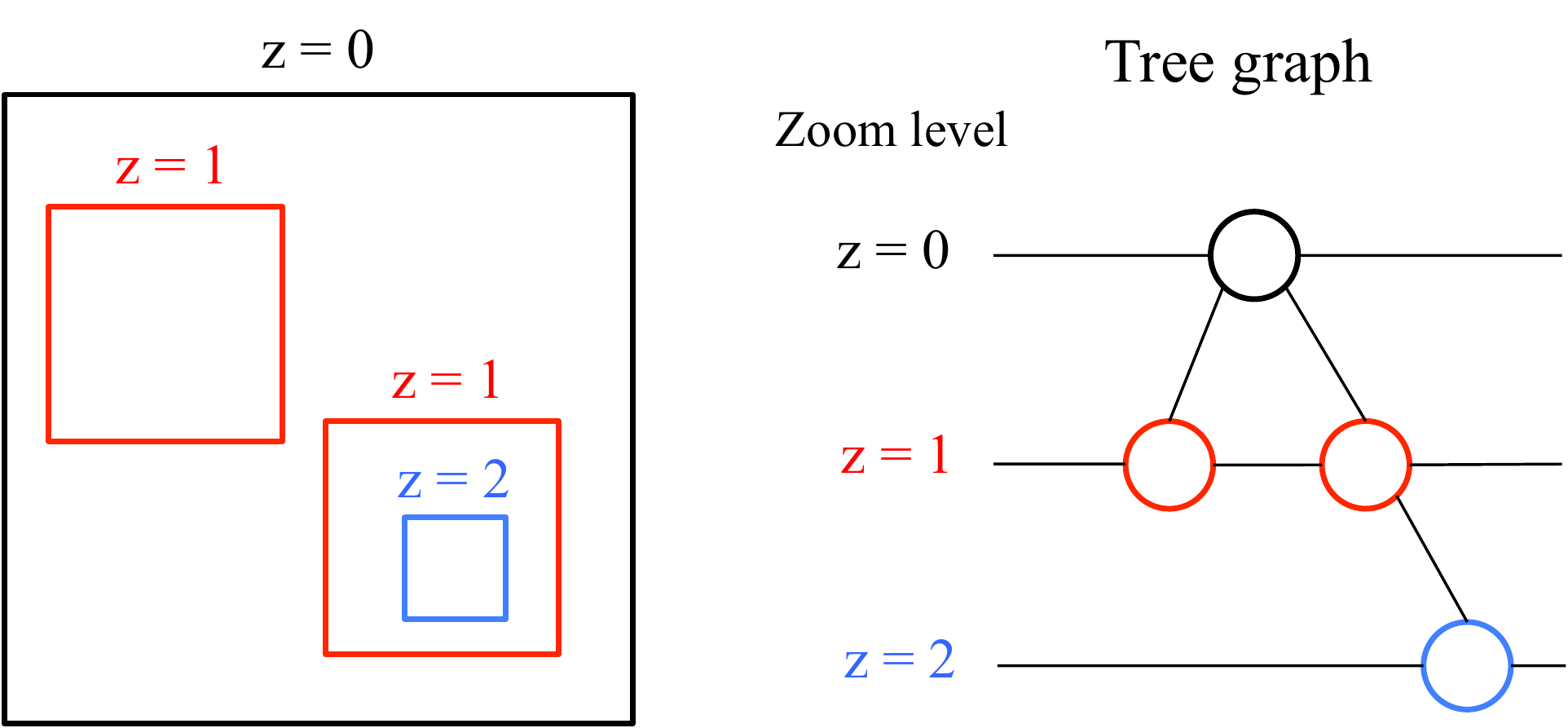}
\caption{Illustration of the tree structure of ProSRS algorithm on a 2-D problem. The black box on the left shows the domain of a root node. The two red boxes and one blue box show two children and one grandchild of the root node.}
\label{fig:tree_diagram}
\end{center}
\end{figure}

\subsection{Overview of the algorithm}

We now present our algorithm, namely Progressive Stochastic Response Surface (ProSRS)\footnote{Code is publicly available at \url{https://github.com/compdyn/ProSRS}.}, in Alg.~\ref{alg:ProSRS}. Like most surrogate optimization algorithms, ProSRS starts with a space-filling design of experiments (DOE). Here we use Latin hypercube sampling with maximin criterion for the initial design. In our algorithm, a node $\mathcal{N}$ is formally defined by a quadruplet:
\begin{equation}
  \label{eqn:node}
  \mathcal{N} = (D, \Omega, S, \beta),
\end{equation}
where $D$ is the evaluation data in the domain $\Omega$. The variable $S$ characterizes the exploitation (versus exploration) strength of ProSRS. Mathematically, it is a tuple:
\begin{equation}
  \label{eqn:state}
  S = (\gamma,p,\sigma),
\end{equation}
where $\gamma$ is a radial basis regression parameter (see Section~\ref{sub_sect:rbf}) and $p,\sigma$ are two parameters in the step of proposing new points (see Section~\ref{sub_sect:srs}). The variable $\beta$ in Eq.~\ref{eqn:node} is the zoom-out probability.

\begin{algorithm}
\caption{Progressive Stochastic Response Surface (ProSRS)}\label{alg:ProSRS}
\begin{algorithmic}[1]
\State \textbf{Inputs:} $m$, $\beta_\text{init}$, $S_\text{init}$ and $N$
\State Generate $m$ Latin hypercube samples: $X = (x_1, x_2,\ldots,x_m)$
\State Evaluate samples $X$ in parallel to give $Y = (y_1, y_2,\ldots,y_m)$
\State Initialize the current node = $(D,\Omega,\beta,S)$ with evaluation data $D = (X,Y)$, domain $\Omega =$ optimization domain $\mathcal{D}$, zoom-out probability $\beta = \beta_\text{init}$ and variable $S = S_\text{init}$
\For {$\text{iteration} = 1,2,\ldots,N$}
	\State Obtain $D,\Omega,\beta,S$ from the current node
	\State $g \gets \text{RBF}(D, S)$ \label{pseudo_code:rbf}\Comment{Build radial basis surrogate (see Sect.~\ref{sub_sect:rbf})}
	\State $X_\text{new}\gets \text{SRS}(D,\Omega,S,g)$\label{pseudo_code:srs} \Comment{Propose new points (see Sect.~\ref{sub_sect:srs})}
	\State $Y_\text{new} \gets$ evaluate points $X_\text{new}$ in parallel
	\State Augment evaluation data $D$ with $(X_\text{new},Y_\text{new})$
	\State Update the variable $S$ of current node \label{pseudo_code:update_state} \Comment{see Sect.~\ref{sub_sect:state}}
	\If {$S$ reaches the critical value} \label{pseudo_code:crit_state}
		\If {restart condition is met} \label{pseudo_code: restart_condition}
			\State Restart from DOE \label{pseudo_code:restart}
		\Else
			\State Create or update a child node \label{pseudo_code:zoom_in}\Comment{Zoom in (see Sect.~\ref{sub_sect:zoom})}
			\State Reset variable $S$ of current node, and set the child node to be the current node
		\EndIf
	\EndIf
	\If {no restart \textbf{and} the parent of current node exists} \label{pseudo_code:check_zoom_out}
		\State With probability $\beta$, set its parent node to be the current node \Comment{Zoom out}\label{pseudo_code:zoom_out}
	\EndIf
\EndFor
\State \textbf{return} $x_\text{best} =$ the evaluated point with the lowest $y$ value
\end{algorithmic}
\end{algorithm}

For each iteration, we first construct a radial-basis surrogate using the evaluation data $D$ (Line~\ref{pseudo_code:rbf}), followed by the step of proposing new points for parallel evaluation (Line~\ref{pseudo_code:srs}). The proposed points must not only exploit the optimal locations of a surrogate, but also explore the untapped regions in the domain to improve the quality of the surrogate. Indeed, achieving the appropriate balance between exploitation and exploration is the key to the success of a surrogate optimization algorithm. For this, we use an efficient procedure, known as Stochastic Response Surface (SRS) method, that was first developed by \citet{regis2007stochastic} and later extended to the parallel setting in their subsequent work \citep{regis2009parallel}.

After performing expensive evaluations in parallel, we update the exploitation strength variable $S$ (Line~\ref{pseudo_code:update_state}) so that for a specific node, the exploitation strength progressively increases with the number of iterations (see Section~\ref{sub_sect:state} for the update rule). The purpose of this step is to help locate the optimal region for zooming in. Once the exploitation strength reaches some prescribed threshold (Line~\ref{pseudo_code:crit_state}; see Section~\ref{sub_sect:zoom} for details), the algorithm will decide to zoom in (Line~\ref{pseudo_code:zoom_in}) by setting a child to be the current node (neglecting the restart step in Line~\ref{pseudo_code:restart} for now). The updating of the variable $S$ and the zoom-in mechanism generally make ProSRS ``greedier'' as the number of iterations increases. To balance out this increasing greediness over iterations, we implement a simple $\epsilon$-greedy policy by allowing the algorithm to zoom out with some small probability in each iteration (Line~\ref{pseudo_code:zoom_out}). Because of the mechanism of zooming in and out, ProSRS will generally form a ``tree'' during the optimization process, as illustrated in Fig.~\ref{fig:tree_diagram}.

Finally we would like to address the restart steps (Line~\ref{pseudo_code: restart_condition} and \ref{pseudo_code:restart}) in Alg.~\ref{alg:ProSRS}. We make the algorithm restart completely from scratch when it reaches some prescribed resolution after several rounds of zooming in. Specifically, to check whether to restart, we first perform the step of creating or updating a child node like the normal zoom-in process (Line~\ref{pseudo_code:zoom_in}). Suppose the resulted child node has $n$ points in its domain $\Omega\subseteq\mathbb{R}^d$, then ProSRS will restart if for all $i = 1,2,\ldots,d$,
\begin{equation}\label{eq:restart_cond}
n^{-\frac{1}{d}}\ell_i(\Omega) < r(b_i-a_i),
\end{equation}
where $r\in (0,1)$ is a prescribed resolution parameter, $\ell_i(\Omega)$ denotes the length of the domain $\Omega$ in the $i^\text{th}$ dimension, $a_i$ and $b_i$ are the bounds for the optimization domain $\mathcal{D}$ (Eq.~\ref{eq:optim_prob}). The reason for restarting from a DOE is to avoid the new runs being biased by the old runs so that the algorithm has a better chance to discover other potentially optimal regions. Indeed, extensive study \citep{regis2007stochastic,regis2009parallel,regis2016trust} has shown that restarting from the initial DOE is better than continuing the algorithm with past evaluations.

\subsection{Weighted radial basis surrogate functions}\label{sub_sect:rbf}

Given the evaluation data $D = \{(x_1,y_1),(x_2,y_2),\ldots,(x_n,y_n)\}$, a radial basis surrogate takes the form
\begin{equation}\label{eq:rbf_form}
	g(x) = \sum_{i=1}^n c_i\phi(\lVert x-x_i\rVert), \quad x\in\mathbb{R}^d,
\end{equation}
where the function $\phi$ is a radial basis function. In this work, we choose $\phi$ to be a multiquadric function. The radial basis coefficients $c_i$ are obtained by minimizing the $\text{L}_2$-regularized weighted square loss:
\begin{equation}\label{eq:rbf_loss}
	\text{Loss} = \sum_{j=1}^n e^{\gamma\hat{y}_j}\big(y_j-g(x_j)\big)^2+\lambda\sum_{j=1}^n c_j^2, \quad \text{with} \quad \hat{y}_j = \frac{y_j-\min y_k}{\max y_k-\min y_k},
\end{equation}
where $\gamma$ is a non-positive weight parameter (one component of the variable $S$; see Eq.~\ref{eqn:state}) and $\lambda$ is a regularization constant determined automatically through cross validation. This loss function is quadratic in the coefficients $c_i$ so that the minimization problem admits a unique solution and can be solved efficiently.

The term $e^{\gamma\hat{y}_j}$ in Eq.~\ref{eq:rbf_loss} represents the weight for the $j^\text{th}$ data point, and $\hat{y}_j$ can be interpreted as the normalized $y$ value with the understanding that $\hat{y}_j = 0$ if $\max y_k = \min y_k$. It is clear that $\gamma = 0$ disables the weighting in the RBF regression. When $\gamma$ is negative, the points with smaller $y$ values gain more weight, so the RBF regression produces a better fit for the points with low $y$ values (the ``best'' points). Consequently, smaller weight parameter $\gamma$ values imply greater exploitation.

\subsection{Stochastic response surface method}\label{sub_sect:srs}

To propose new points for parallel evaluations, we use the general Stochastic Response Surface (SRS) framework \citep{regis2007stochastic,regis2009parallel}. The first step of the stochastic response surface method is to randomly generate candidate points in the domain $\Omega$. In the original SRS work \citep{regis2007stochastic}, the authors introduced two types of candidate points and proposed one algorithm for each type. Here we consider the candidate points to be a mixture of both types. 

Type~I candidate points are sampled uniformly over the domain. Type~II candidate points are generated by adding Gaussian perturbations around the current best point $x^*$, where $x^*$ is the point in the evaluation data $D$ with the lowest value of the RBF surrogate $g$. The covariance matrix for the Gaussian perturbation is a diagonal matrix with its diagonal being $\sigma^2 l_i^2(\Omega)$ ($i=1,2,\ldots,d$), where $\sigma$ is one component of the variable $S$ (see Eq.~\ref{eqn:state}) and $l_i(\Omega)$ is the length of the domain in the $i^\text{th}$ dimension. Any generated point that lies outside the domain would be replaced by the nearest point in the domain so that all the Type~II candidate points are within $\Omega$. The proportion of these two types of candidate points is controlled by a parameter $p$, which is another component of the variable $S$. Specifically, we generate $1000d$ candidate points with a fraction of $\frac{1}{10}\lfloor 10p\rfloor$ points being Type~I and the remainder being Type~II.

The second step is to measure the quality of each candidate point using two criteria: the value of the response surface (RBF surrogate) and the minimum distance from previously evaluated points. The points with low response values are of high exploitation value, while the ones with large minimum distances are of high exploration value. In the SRS method, every candidate point is given a score on each of the two criteria, and a weight between 0 and 1 is used for trading off one criterion for the other. For our algorithm, we generate an array of weights that are equally-spaced in the interval $[0.3, 1]$ with the number of weights being equal to the number of proposed points per iteration (if the number of proposed points per iteration is one, we alternate weights between 0.3 and 1 from iteration to iteration). This weight array, also known as the ``weight pattern'' in the original work \citep{regis2007stochastic}, is used to balance between exploitation and exploration among the proposed points. The procedures of scoring the candidate points and selecting the proposed points from the candidate points based on the weight pattern are described in detail in \citet{regis2009parallel}.

\subsection{Update procedure for variable $S$}\label{sub_sect:state}

After obtaining new evaluations, we update the variable $S$ of the current node (Line~\ref{pseudo_code:update_state} of Alg.~\ref{alg:ProSRS}). The goal of this updating step is to gradually increase the exploitation strength. As listed in Eq.~\ref{eqn:state}, the variable $S$ of a node consists of 3 parameters: (1) a weight parameter $\gamma$ for radial basis regression, (2) a parameter $p$ that controls the proportion of Type~I candidate points in the SRS method, and (3) a parameter $\sigma$ that determines the spread of Type~II candidate points. The exploitation strength will be enhanced by decreasing any of these 3 parameters.

\begin{algorithm}
\caption{Update $p$, $\sigma$ and $\gamma$}\label{alg:update_state}
\begin{algorithmic}
	\If{$p\geq 0.1$}
		\State $p \gets p n_\text{eff}^{-\frac{1}{d}}$
	\ElsIf{the counter for number of consecutive failed iterations = $C_\text{fail}$}
		\State Reset the counter
		\State $\sigma \gets \sigma/2$ and $\gamma \gets \gamma-\Delta\gamma$
	\EndIf
\end{algorithmic}
\end{algorithm}

The update rule is specified in Alg.~\ref{alg:update_state}, which can be understood as having two separate phases. The first phase is when there are still some Type~I candidate points generated in the SRS method (i.e., $p \geq 0.1$). During this phase, the values of $\sigma$ and $\gamma$ are unchanged but the $p$ value is decreased with each iteration. The rate of decrease is determined by $n_\text{eff}^{-1/d}$, where $n_\text{eff}$ is the effective number of evaluations for the current iteration. The effective number of evaluations $n_\text{eff}$ is computed by first uniformly partitioning the domain $\Omega$ into cells with the number of cells per dimension being equal to $\lceil n^{1/d} \rceil$, where $n$ is the number of points in the evaluation data $D$. Then $n_\text{eff}$ is number of cells that are occupied by at least one point. The quantity $n_\text{eff}^{1/d}$ can be viewed as a measurement of the density of the evaluated points in the domain $\Omega$. Therefore, we essentially make the decreasing rate proportional to the evaluation density.

When the $p$ value drops below 0.1, so that all the candidate points are Type~II, we enter the second phase of the state transition, where the parameter $p$ does not change but $\sigma$ and $\gamma$ are reduced. Just like in \citet{regis2007stochastic}, we use the number of consecutive failures as the condition for deciding when to reduce the value of $\sigma$. Here an iteration is counted as a failure if the best $y$ value of the proposed points for the current iteration does not improve the best $y$ value of the evaluations prior to the proposing step. The counter is set to zero at the beginning of the algorithm, and starts to count the number of consecutive failures only when $p<0.1$. Whenever the number of consecutive failures reaches some prescribed threshold $C_\text{fail}$, we reduce $\sigma$ by half and decrease $\gamma$ by $\Delta\gamma$.

\subsection{Zoom strategy}\label{sub_sect:zoom}

The updating of the variable $S$ (Line~\ref{pseudo_code:update_state}) will make the parameter $\sigma$ gradually decrease over iterations. Once $\sigma$ drops below some critical value $\sigma_\text{crit}$ (i.e., $S$ reaches the critical value in Line~\ref{pseudo_code:crit_state}) and the restart condition is not satisfied, the algorithm will zoom in by either creating a new child node or updating an existing child node. Specifically, we start the zoom-in process by finding the point that has the lowest fit value among the evaluation data $D$, which we will denote as $x^*$. Depending on the location of $x^*$ and the locations of the children of the current node, there are two possible scenarios.

The first scenario is that $x^*$ does not belong to the domain of any of the existing child nodes or there is no child for the current node. In this case a new child node is created. The domain $\Omega$ of this child node is generated by shrinking the domain of the current node with the center being at $x^*$ and the length of each dimension being $\rho$-fractional of that of the current domain. The parameter $\rho\in (0,1)$ is called the zoom-in factor, which is a constant set prior to the start of the algorithm. After shrinkage, any part of the new domain that is outside the current domain will be clipped off so that the domain of a child is always contained by that of its parent. Given the domain of the new child node, its evaluation data $D$ are all the past evaluations that are within this domain. The zoom-out probability $\beta$ and the variable $S$ of this child node are set to the initial values $\beta_\text{init}$ and $S_\text{init}$ respectively.

The other possibility is that $x^*$ belongs to at least one child of the current node. Among all children whose domains contain $x^*$, we select the child whose domain center is closest to $x^*$. The evaluation data $D$ of this selected child node is updated by including all the past evaluations that are within its domain. Since the selected child node is being revisited, we reduce its zoom-out probability by $\beta \gets \text{max}(\beta/2, \beta_\text{min})$, where $\beta_\text{min}$ is a constant lower bound for the zoom-out probability.

\section{Convergence}\label{sect:converge}

In this section we state a convergence theorem for our ProSRS algorithm (Alg.~\ref{alg:ProSRS}). More specifically, if ProSRS is run for sufficiently long, with probability converging to one there will be at least one point among all the evaluations that will be arbitrarily close to the global minimizer of the objective function. Because the point returned in each iteration is the one with the lowest noisy evaluation (not necessarily with the lowest expected value), as the underlying expectation function is generally unknown, this theoretical result does not immediately imply the convergence of our algorithm. However, in practice one may implement posterior selection procedures for choosing the true best point from the evaluations using discrete optimization algorithms such as those by \citet{nelson2001simple} and \citet{ni2013ranking}.

\begin{thm}\label{thm:converge}
Suppose the objective function $F$ in Eq.~\ref{eq:optim_prob} is continuous on the domain $\mathcal{D}\subseteq\mathbb{R}^d$ and $x_\text{opt}$ is the unique minimizer of $F$, characterized by\footnote{Here we adopt the convention that if $\{x\in\mathcal{D}, \lVert x-x_\text{opt} \rVert \geq \eta\}= \emptyset$, then $\inf_{x\in\mathcal{D}, \lVert x-x_\text{opt} \rVert \geq \eta} F(x) = +\infty$.} $F(x_\text{opt}) = \inf_{x\in\mathcal{D}} F(x) \in (-\infty, +\infty)$ and $\inf_{x\in\mathcal{D}, \lVert x-x_\text{opt} \rVert \geq \eta} F(x) > F(x_\text{opt})$ for all $\eta>0$. Let $x_n$ be the point with the minimum objective value among all the evaluated points up to iteration $n$. Then $x_n \xrightarrow{} x_\text{opt}$ almost surely as $n\to\infty$. 
\end{thm}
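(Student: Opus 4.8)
The plan is to prove the equivalent statement that, almost surely, some evaluated point eventually lies arbitrarily close to $x_\text{opt}$, and then to read off $x_n\to x_\text{opt}$ from the regularity of $F$. For $\epsilon>0$, let $A_\epsilon$ be the event that no evaluated point ever lies in the open ball $B(x_\text{opt},\epsilon)$; it suffices to show $P(A_\epsilon)=0$ for every $\epsilon$. Indeed, fix $\epsilon$; since $\mathcal{D}$ is compact and $F$ is continuous with the stated strict-minimizer property, $\delta\coloneqq\inf_{x\in\mathcal{D},\,\lVert x-x_\text{opt}\rVert\ge\epsilon}F(x)-F(x_\text{opt})>0$ (the assertion is trivial if this index set is empty), and by continuity of $F$ at $x_\text{opt}$ there is $\eta\in(0,\epsilon)$ with $F(x)<F(x_\text{opt})+\delta$ whenever $\lVert x-x_\text{opt}\rVert<\eta$. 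Granting $P(A_\eta)=0$, almost surely some evaluated point $x'$ satisfies $F(x')<F(x_\text{opt})+\delta$; since $x_n$ minimizes $F$ over the evaluated set, which only grows, $F(x_n)\le F(x')$ for all large $n$, hence $\lVert x_n-x_\text{opt}\rVert<\epsilon$ eventually. Applying this with $\epsilon=1/k$ and intersecting over $k\in\mathbb{N}$ gives $x_n\to x_\text{opt}$ a.s.

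Now fix $\epsilon>0$ and suppose $P(A_\epsilon)>0$. The first ingredient is that, almost surely, the algorithm occupies a node whose domain equals $\mathcal{D}$ --- a root node --- at infinitely many iterations. This rests on a structural fact: a child's $i$-th side length is at most $\rho$ times its parent's, so a depth-$k$ node has $i$-th side length at most $\rho^k(b_i-a_i)$; once $\rho^k<r$, any freshly created child (which contains at least one point) satisfies the restart test of Eq.~\ref{eq:restart_cond} in every coordinate, so the tree has depth bounded by a constant $L=L(\rho,r)$. Given this, consider two cases. If there are infinitely many restarts, each restart opens an epoch at a fresh root, and the opening visit to that root spans at least two iterations (the update rule of Alg.~\ref{alg:update_state} needs several iterations to drive $\sigma$ below $\sigma_\text{crit}$), so infinitely many iterations are spent at a root. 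If there are only finitely many restarts, then the current node eventually stays in a fixed subtree of depth $\le L$; since every non-root node has zoom-out probability at least $\beta_\text{min}>0$, every visit to a non-root node ends almost surely, and a random-walk argument on this finite-depth tree (using the uniform lower bound $\beta_\text{min}$) shows the subtree's root is revisited infinitely often --- or a single root visit lasts forever, which serves just as well. In every case there are, almost surely, infinitely many iterations at a root node; and since each root visit lasts at least two iterations, infinitely many of those iterations propose a point selected with the most exploratory weight $0.3$.

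I would then run the probabilistic argument at such an iteration. At a root node the SRS candidate pool always contains a point landing in $B(x_\text{opt},\epsilon/2)$ with probability bounded below uniformly: a Type~I candidate is uniform on $\mathcal{D}$, while --- when $p<0.1$ --- a Type~II candidate is a Gaussian centred at the current best point with $i$-th standard deviation at least $\sigma_\text{crit}\,\ell_i(\mathcal{D})$ and then clipped to $\mathcal{D}$, whose density is bounded below on all of $\mathcal{D}$. If some proposed point of the iteration lies in $B(x_\text{opt},\epsilon)$ we contradict $A_\epsilon$ outright; otherwise that candidate was not selected, so it is still in the pool when the weight-$0.3$ point is chosen, and on $A_\epsilon$ it is at distance $\ge\epsilon/2$ from every evaluated point and every point already selected in this iteration. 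Hence the largest available minimum-distance score is $\ge\epsilon/2$, and feeding this into the scoring-and-selection rule of \citet{regis2009parallel} with weight $0.3<\tfrac12$ shows the selected point has minimum distance at least $\tfrac{4}{7}\cdot\tfrac{\epsilon}{2}$ to all evaluated points; this point is then evaluated. Thus, on $A_\epsilon$, every such iteration produces a new evaluated point that is $\tfrac{2\epsilon}{7}$-separated from all earlier ones. By L\'evy's conditional Borel--Cantelli lemma --- the success probability at each of the infinitely many relevant iterations being bounded below given the past --- this occurs infinitely often almost surely on $A_\epsilon$, yielding an infinite $\tfrac{2\epsilon}{7}$-separated subset of the bounded set $\mathcal{D}$, which is impossible. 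Hence $P(A_\epsilon)=0$, and the theorem follows.

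The main difficulty lies in the two structural inputs rather than the bookkeeping. The first is the long-run tree dynamics: making rigorous that, absent infinitely many restarts, the algorithm cannot become trapped in a proper subtree and must return to a full-domain root infinitely often --- here the depth bound and the uniform lower bound $\beta_\text{min}$ on the zoom-out probability are both essential. The second is the quantitative interaction between SRS's weighted scoring and the random candidate generation, i.e.\ the inequality guaranteeing that the most exploratory proposed point retains a fixed fraction of the best available exploration (minimum-distance) score; once that is established, the bounded-domain packing contradiction, the conditional Borel--Cantelli step, and the concluding appeal to the continuity and strict minimality of $F$ are all routine.
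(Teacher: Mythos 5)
Your overall plan is sound and the final reduction (from ``some evaluated point eventually enters every ball around $x_\text{opt}$'' to $x_n\to x_\text{opt}$ via the strict-minimizer hypothesis) is correct and, if anything, cleaner than the paper's closing step. But the core of your argument takes a genuinely different --- and much harder --- route than the paper's, and the two lemmas you yourself flag as ``the main difficulty'' are exactly the ones the paper's proof is engineered to avoid. The key observation you miss is that one can lower-bound the probability that \emph{all} $t$ candidate points of an iteration land in $\mathcal{B}(x_\text{opt},\delta)\cap\mathcal{D}$ by $\nu(\epsilon)^t>0$; on that event the proposed points lie in the ball \emph{no matter what the scoring rule does}, so the entire quantitative analysis of the weight-$0.3$ selection, the $\tfrac{4}{7}$ distance-retention inequality, and the packing contradiction become unnecessary. (Your $\tfrac{4}{7}$ computation does appear to go through for the normalized score $0.3V_R+0.7V_D$ with $V_R,V_D\in[0,1]$, but it leans on normalization conventions of \citet{regis2009parallel} that this paper never states, which is a fragile foundation.) Likewise, the paper does not need your standalone ``the root is visited infinitely often a.s.'' lemma: it conditions on blocks of $\Delta=\max(z_\text{max},N_\text{DOE}+1)$ consecutive iterations and prices the return to the root directly into the per-block success probability $h(\epsilon)=L(\epsilon)(\beta_\text{min})^\Delta$, via the event that the algorithm elects to zoom out $z_{i-1}\le z_\text{max}$ times in a row (or restarts, in which case the DOE offset $N_\text{DOE}+1$ is used). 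Your recurrence sketch for the finitely-many-restarts case is the one place with a genuine hole as written: a zoom-in and a zoom-out can occur in the same iteration, so ``$L$ consecutive zoom-out decisions'' does not by itself force the depth to zero; you need the additional observation that after a zoom-in (or a zoom-out to a parent whose $S$ was just reset) the exploitation variable satisfies $\sigma\ge\sigma_\text{crit}$ for several iterations, so no immediate re-descent is possible. Both approaches share the depth bound $z_\text{max}=\lceil\log_\rho r\rceil$ derived from the restart condition, and both finish with the same monotonicity argument; the paper's version buys a short, selection-rule-agnostic proof at the price of a cruder constant $h(\epsilon)$, while yours, if completed, would say something stronger about the exploratory behaviour of the SRS scoring itself.
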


\begin{proof}
	See Appendix~\ref{sect:converge_proof}.
\end{proof}

\section{Numerical results}\label{sect:numerical}

We compare our algorithm to three state-of-the-art parallel Bayesian optimization algorithms: GP-EI-MCMC \citep{snoek2012practical}, GP-LP \citep{gonzalez2016batch} with acquisitions LCB and EI. The parameter values of ProSRS algorithm are listed in Table~\ref{table:param_prosrs}, where $d$ is optimization dimension and $N_\text{par}$ is the number of points evaluated in parallel per iteration.

\begin{table}
  \caption{Parameter values for the ProSRS algorithm}
  \label{table:param_prosrs}
  \centering
  \begin{tabular}{ccc}
    	\toprule
    Parameter  &  Meaning & Value \\
    \midrule
    $m$ & number of DOE samples & $\lceil 3/N_\text{par}\rceil N_\text{par}$ \\
    $S_\text{init}$ & initial value of exploitation strength variable $S$ & (0, 1, 0.1) \\
    $\sigma_\text{crit}$ & critical $\sigma$ value & 0.025 \\
    $\beta_\text{init}$ & initial zoom-out probability & 0.02 \\
    $\beta_\text{min}$ & minimum zoom-out probability & 0.01 \\
    $\rho$ & zoom-in factor & 0.4 \\
    $r$ & resolution parameter for restart & 0.01 \\
    $C_\text{fail}$ & critical value for number of consecutive failures & $\text{max}(\lceil d/N_\text{par} \rceil, 2)$\\
    $\Delta\gamma$ & change of $\gamma$ value & 2 \\
    \bottomrule
  \end{tabular}
\end{table}

For test problems we used a suite of standard optimization benchmark problems from the literature and two hyperparameter-tuning problems: (1) tuning 5 hyperparameters of a random forest (2) tuning 7 hyperparameters of a deep neural network. The details of the benchmark problems and the hyperparameter-tuning problems are given in Appendix~\ref{sect:benchmark} and Appendix~\ref{sect:hyper} respectively.

\subsection{Optimization performance versus iteration}\label{subsec:optim_perform}

The first results that we consider are the optimization result (function value) versus iteration number. All the algorithms are proposing and evaluating the same number of points per iteration, so these results measure the quality of these proposed points. As we will see, ProSRS does significantly better than the existing methods.

Figure~\ref{fig:benchmark_opt_curve} shows performance of the algorithms on 12 optimization benchmark functions, for which the dimension varies from 2 to 10 (the last numeric figure in the function name indicates the dimension). The objective function on the y axis is the evaluation of the underlying true expectation function (not the noisy function) at the algorithm output. The error bar shows the standard deviation of 20 independent runs. All algorithms are run using 12 parallel cores of a Blue Waters\footnote{Blue Waters: \url{https://bluewaters.ncsa.illinois.edu}.}  XE compute node.

As we can see from Fig.~\ref{fig:benchmark_opt_curve}, our algorithm performs the best on almost all of the problems. In particular, ProSRS is significantly better on high-dimensional functions such as Ackley and Levy, as well as highly-complex functions such as Dropwave and Schaffer. Excellent performance on these benchmark problems shows that our algorithm can cope with various optimization landscape types.

\begin{figure}
\begin{center}
\includegraphics[width=0.9\textwidth]{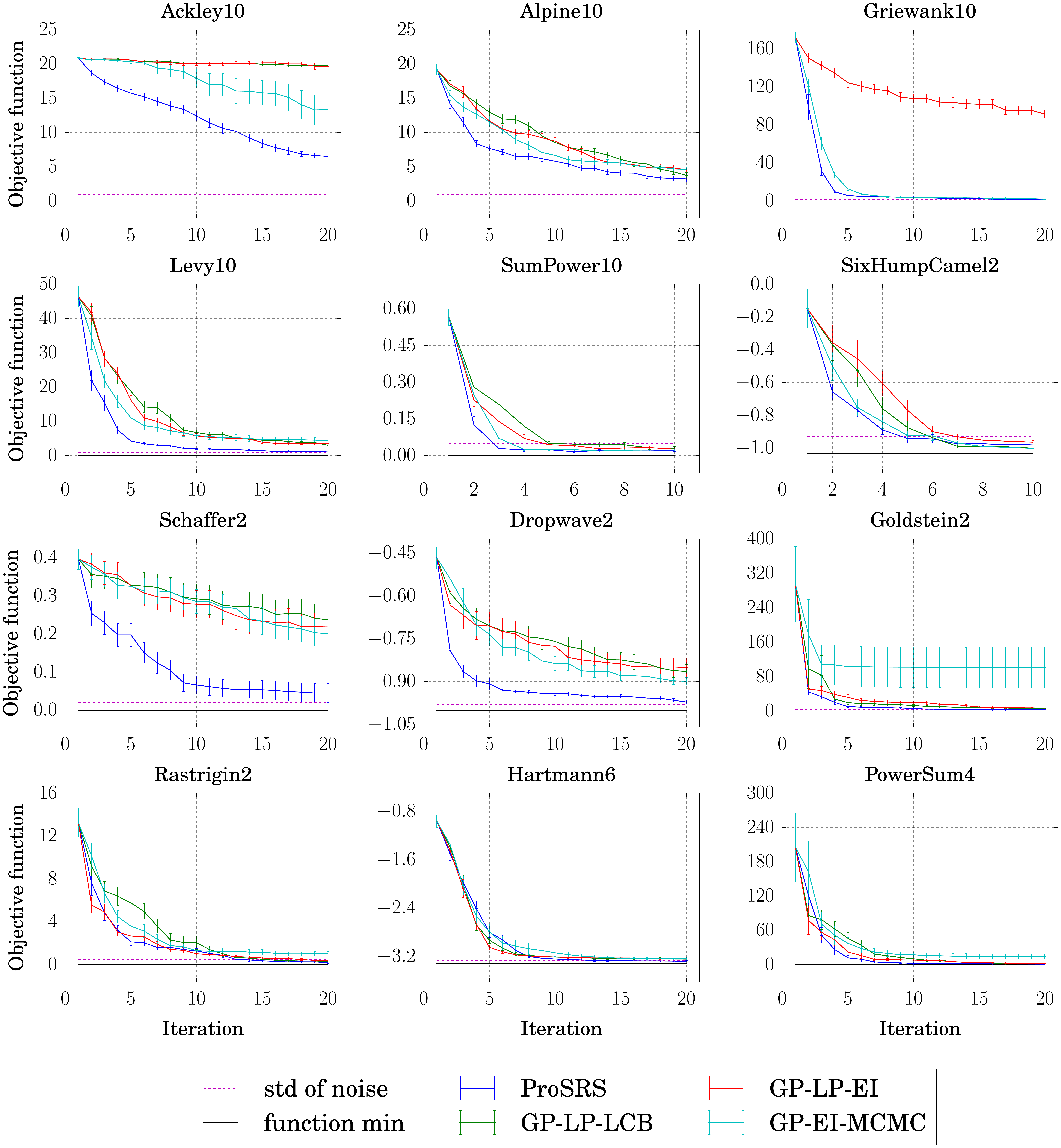}
\caption{Optimization curves for the benchmark functions. The error bar shows the standard deviation of 20 independent runs.}
\label{fig:benchmark_opt_curve}
\end{center}
\end{figure}


Figure~\ref{fig:hyperparameter_optim_curve} shows optimization performance on the two hyperparameter-tuning problems. Here we also include the random search algorithm since random search is one of the popular hyperparameter-tuning methods \citep{bergstra2012random}. First, we see that surrogate optimization algorithms are in general significantly better than the random search algorithm. This is no surprise as the surrogate optimization algorithm selects every evaluation point carefully in each iteration. Second, among the surrogate optimization algorithms, our ProSRS algorithm is better than the GP-EI-MCMC algorithm (particularly on the random forest tuning problem), and is much better than the two GP-LP algorithms.

\begin{figure}
\begin{center}
\includegraphics[width=0.9\textwidth]{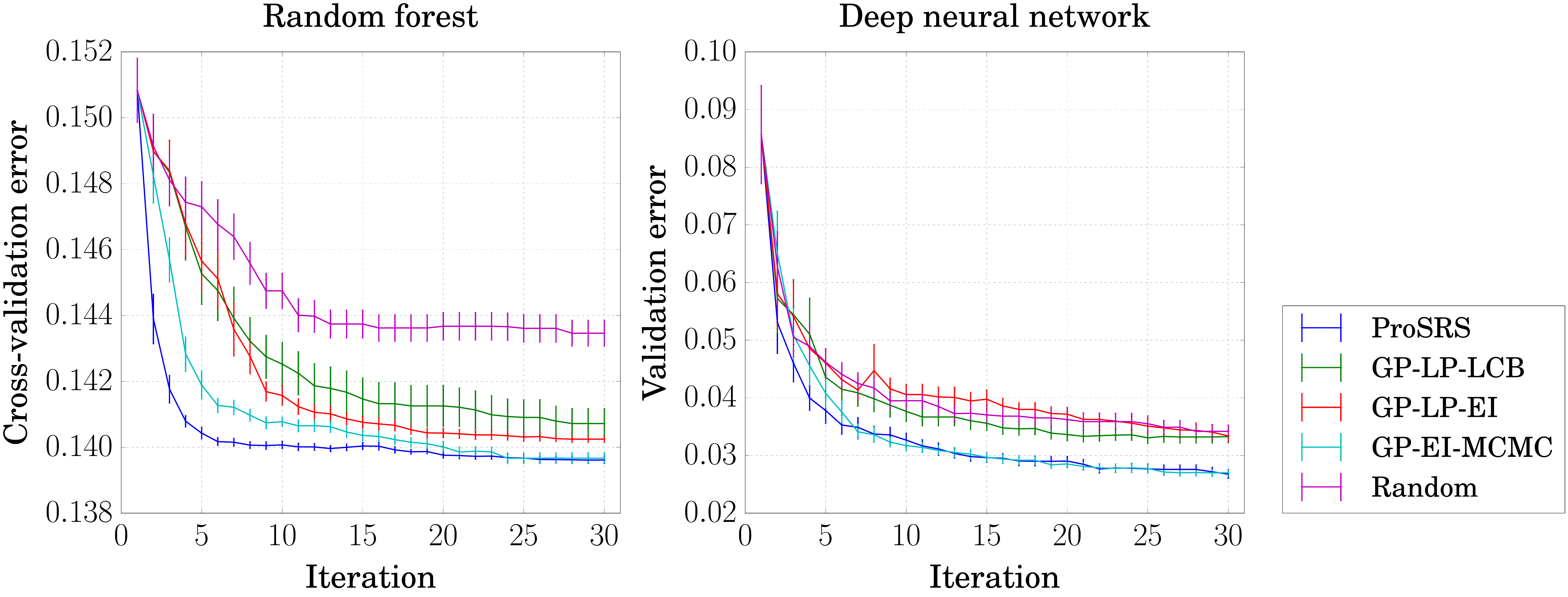}
\caption{Optimization curves for the hyperparameter-tuning problems. The error bar shows the standard deviation of 20 independent runs. All algorithms were run using 8 parallel cores of a Blue Waters XE compute node. The expected error (objective $F$) was estimated by averaging 5 independent samples.}
\label{fig:hyperparameter_optim_curve}
\end{center}
\end{figure}

\subsection{Optimization performance analysis}\label{subsec:optim_perform_analysis}

In Section~\ref{subsec:optim_perform} we demonstrated that our ProSRS algorithm generally achieved superior optimization performances compared to the Bayesian optimization algorithms. In this section, we give some insight into why our algorithm could be better. We performed the analysis with a numerical experiment that studied the modeling capability of RBF (as used in ProSRS) and GP models (as used in the Bayesian optimization methods). 

More specifically, we investigated RBF and GP regression on the twelve optimization benchmark functions that are shown in Fig.~\ref{fig:benchmark_opt_curve}, varying the number $n$ of training data points from 10 to 100. For each test function and every $n$, we first randomly sampled $n$ points $(X_1, X_2,\ldots, X_n)$ over the function domain using Latin hypercube sampling, and then evaluated these $n$ sampled points to get noisy responses $(Y_1, Y_2, \ldots, Y_n)$. Then given the data $(X_1, Y_1)$, $(X_2, Y_2),\ldots$, $(X_n, Y_n)$, we trained 4 models: a RBF model using the cross validation procedure developed in the ProSRS algorithm with no weighting, and 3 GP models with commonly used GP kernels: Matern1.5, Matern2.5 and RBF.

We used the Python scikit-learn package\footnote{Python package for Gaussian Processes: \url{http://scikit-learn.org/stable/modules/gaussian_process.html}.} for the implementations of GP regression. We set the number of restarts for the optimizer in GP regression to be 10.
We evaluated each regression model by measuring the relative error in terms of the $\text{L}_2$ norm of the difference between a model $g$ and the underlying true function $\mathbb{E}[f]$ over the function domain. We repeated the training and evaluation procedure for 10 times, and reported the mean and the standard deviation of the measured relative errors.

The results are shown in Fig.~\ref{fig:surrogate_performance}. We can see that cross-validated RBF regression (as used in our ProSRS method) generally produces a better model than those from GP regression (as used in the Bayesian optimization methods). Specifically, the RBF model from ProSRS is significantly better for the test functions Griewank, Levy, Goldstein and PowerSum, and is on par with GP models for Schaffer, Dropwave and Hartmann.

\begin{figure}
\begin{center}
\includegraphics[width=0.9\textwidth]{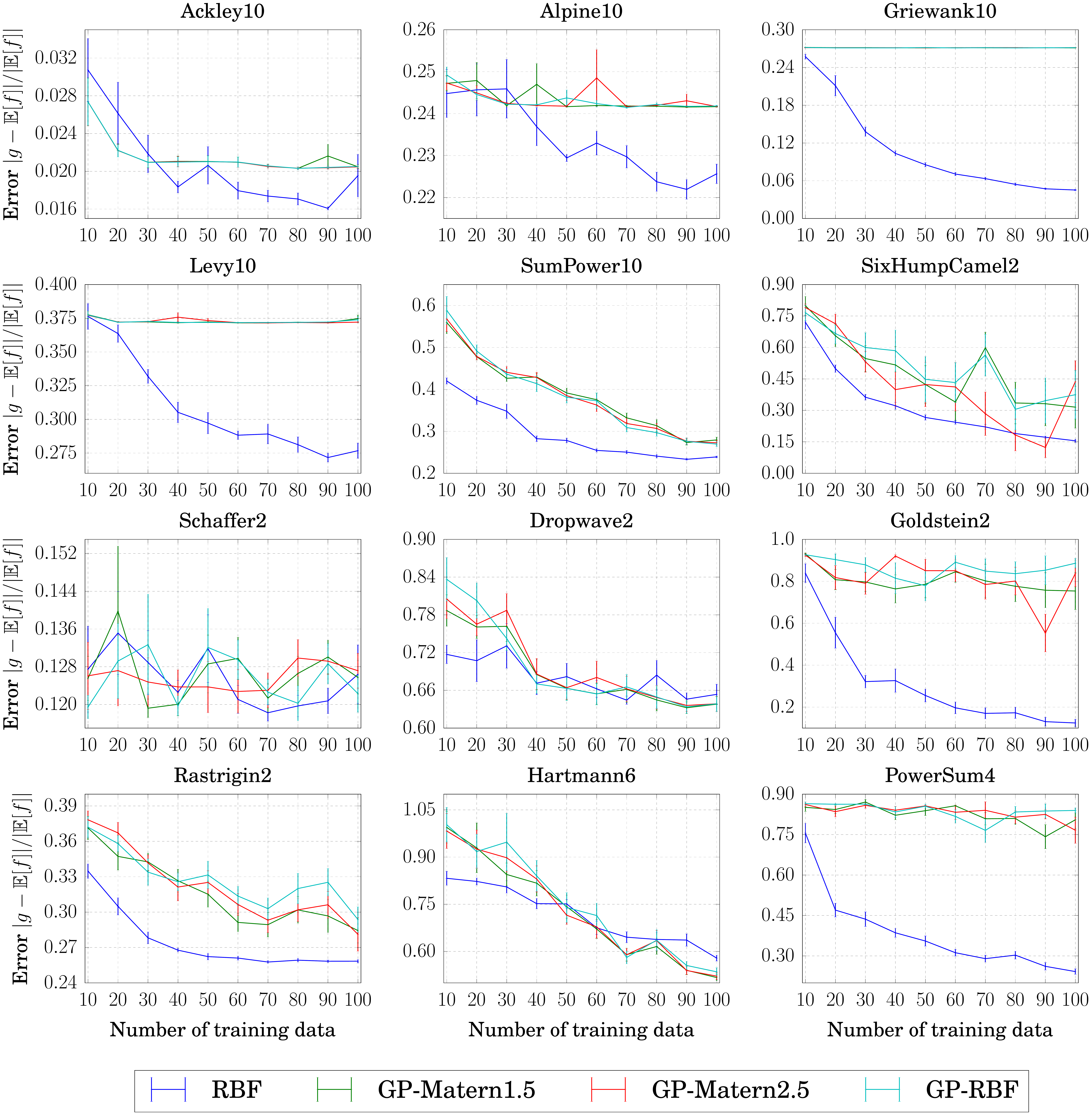}
\caption{Compare the modeling capability of RBF regression as used in ProSRS (dark blue lines) and GP regression with kernels: Matern1.5, Matern2.5 and RBF (green, red and light blue lines respectively) on 12 optimization benchmark functions. The y axis is the relative error in terms of the $\text{L}_2$ norm of the difference between a model $g$ and the underlying true function $\mathbb{E}[f]$ over the function domain. The error bar shows the standard deviation of 10 independent runs.}
\label{fig:surrogate_performance}
\end{center}
\end{figure}

From this numerical study, we can draw two conclusions. First, the ProSRS RBF models seem to be able to better capture the objective functions than GP regression models. One possible explanation for this is that the ProSRS RBF regression uses a cross validation procedure so that the best model is selected directly according to the data, whereas GP regression builds models relying on the prior distributional assumptions about the data (i.e., Gaussian process with some kernel). Therefore, in a way the ProSRS regression procedure makes fewer assumptions about the data and is more ``data-driven'' than GP. Since the quality of a surrogate has a direct impact on how well the proposed points exploit the objective function, we believe that the superiority of the RBF models plays an important part in the success of our ProSRS algorithm over those Bayesian optimization algorithms.

Second, for those test functions where ProSRS RBF and GP have similar modeling performances (i.e., Schaffer, Dropwave and Hartmann), the optimization performance of ProSRS (using RBF) is nonetheless generally better than Bayesian optimization (using the GP models), as we can see from Fig.~\ref{fig:benchmark_opt_curve}. This suggests that with surrogate modeling performance being equal, the ProSRS point selection strategy (i.e., SRS and zoom strategy) may still have an edge over the probablity-based selection criterion (e.g., EI-MCMC) of Bayesian optimization.

\subsection{Algorithm cost}\label{subsec:optim_alg_cost}

In Section~\ref{subsec:optim_perform} we saw that ProSRS achieved faster convergence per iteration, meaning that it was proposing better points to evaluate in each iteration. In this section we will compare the cost of the algorithms and show that ProSRS is in addition much cheaper per iteration. The main focus here is to compare the cost of the algorithm, not the cost of evaluating the function $f$ since the function-evaluation cost is roughly the same among the algorithms.

Figure~\ref{fig:alg_cost_benchmark} and Figure~\ref{fig:alg_cost_hyperparam} show the computational costs of running different algorithms for the twelve optimization benchmark problems and the two hyperparameter-tuning problems. The time was benchmarked on Blue Waters XE compute nodes. We observe that  our ProSRS algorithm is about 1--2 orders of magnitude cheaper than the two GP-LP algorithms, and about 3--4 orders of magnitude cheaper than the GP-EI-MCMC algorithm. It is worth noting that for the hyperparameter-tuning problems (Fig.~\ref{fig:alg_cost_hyperparam}), the cost of the GP-EI-MCMC algorithm is in fact consistently higher than that of the training and the evaluation of a machine learning model, and the cost gap becomes larger as the number of iterations increases.

From Fig.~\ref{fig:alg_cost_hyperparam} we can also see that the cost of our algorithm scales roughly $\sim\mathcal{O}(1)$ with the number of iterations in the long run (i.e., when ProSRS is run with a large number of iterations, the general trend of the cost stays flat with iterations). This scaling behavior is generally true for our algorithm, and is a consequence of the zoom strategy and the restart mechanism exploited by our algorithm.

\begin{figure}
\begin{center}
\includegraphics[width=0.9\textwidth]{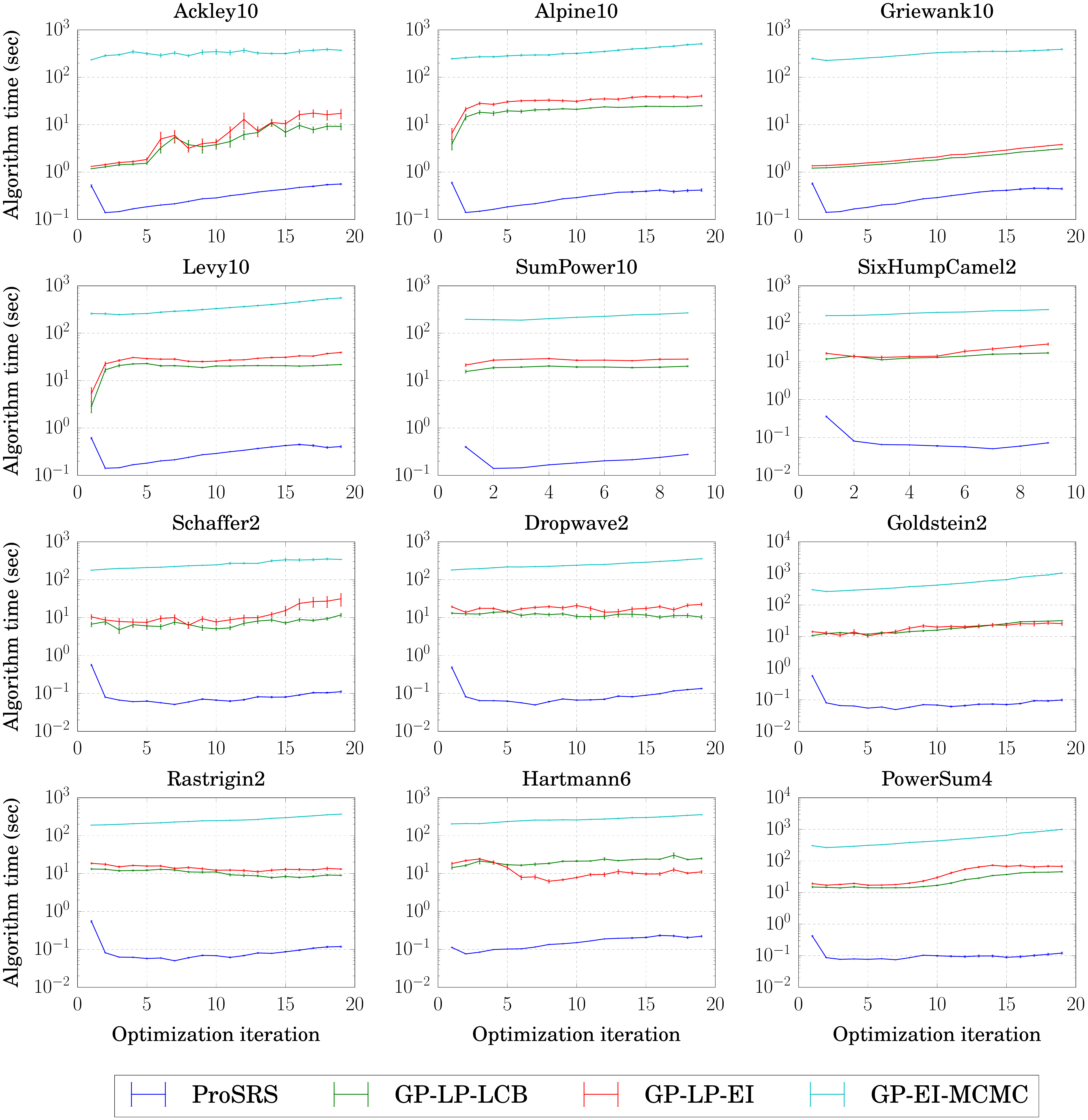}
\caption{Computational costs of different algorithms for the twelve optimization benchmark problems. The plots show the mean and standard deviation of 20 independent runs. The x axis is the number of iterations in actual optimization excluding the initial DOE iteration. The y axis is the actual time that was consumed by an algorithm in each iteration, and does not include the time of parallel function evaluations.}
\label{fig:alg_cost_benchmark}
\end{center}
\end{figure}

\begin{figure}
\begin{center}
\includegraphics[width=0.85\textwidth]{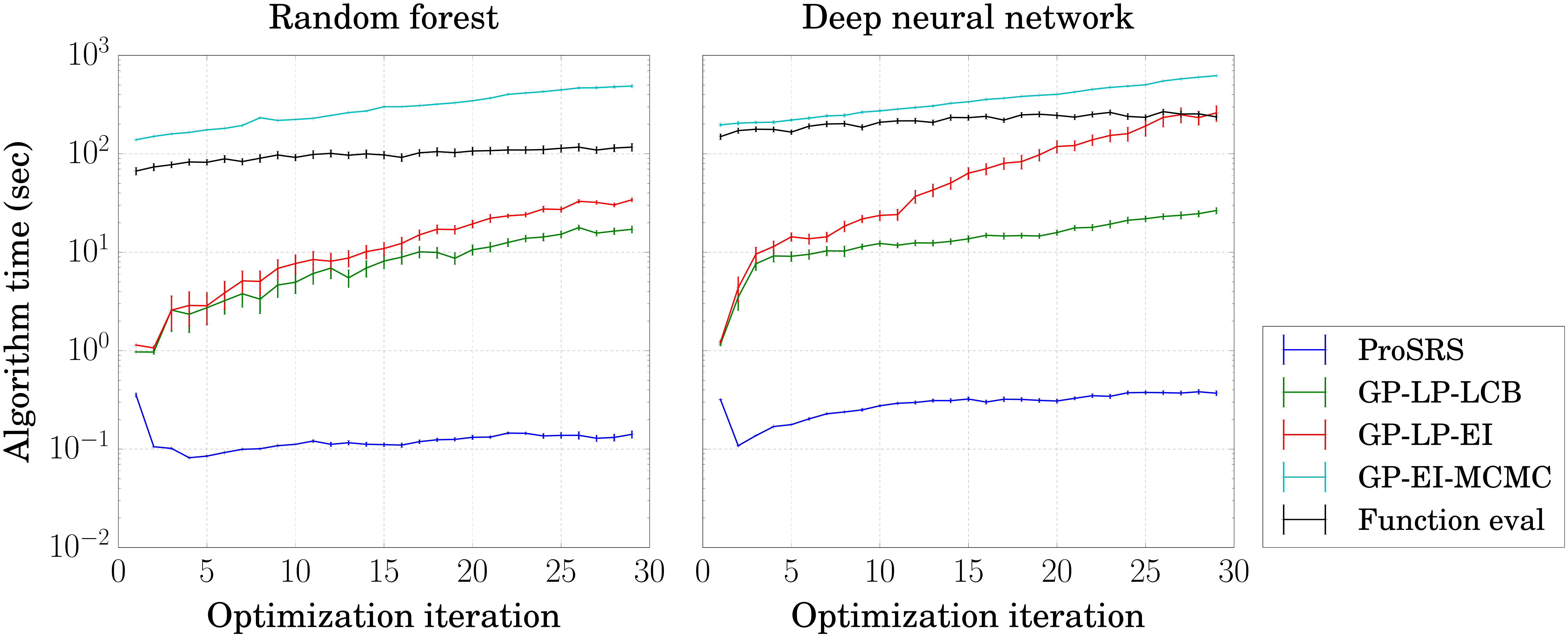}
\caption{Computational costs of different algorithms for the two hyperparameter tuning problems. The plots show the mean and standard deviation of 20 independent runs. The x axis is the number of iterations in actual optimization excluding the initial DOE iteration. For different algorithms, the y axis is the actual time that was consumed by the algorithm in each iteration, and does not include the time of parallel function evaluations. The time for training and evaluating the machine learning models is shown in black.}
\label{fig:alg_cost_hyperparam}
\end{center}
\end{figure}

\subsection{Overall optimization efficiency}\label{subsec:optim_eff}

In this section, we will show the overall optimization efficiency for the two hyperparameter-tuning problems, which takes into account not only the optimization performance per iteration but also the cost of the algorithm and the expensive function evaluations. From Fig.~\ref{fig:hyperparameter_optim_efficiency}, we can see that our ProSRS algorithm is the best among all the algorithms. Because of the high cost of the GP-EI-MCMC algorithm, the advantage of our algorithm over GP-EI-MCMC becomes even more pronounced compared to that of the iteration-based performance measurement (Fig.~\ref{fig:hyperparameter_optim_curve}).

\begin{figure}
\begin{center}
\includegraphics[width=0.85\textwidth]{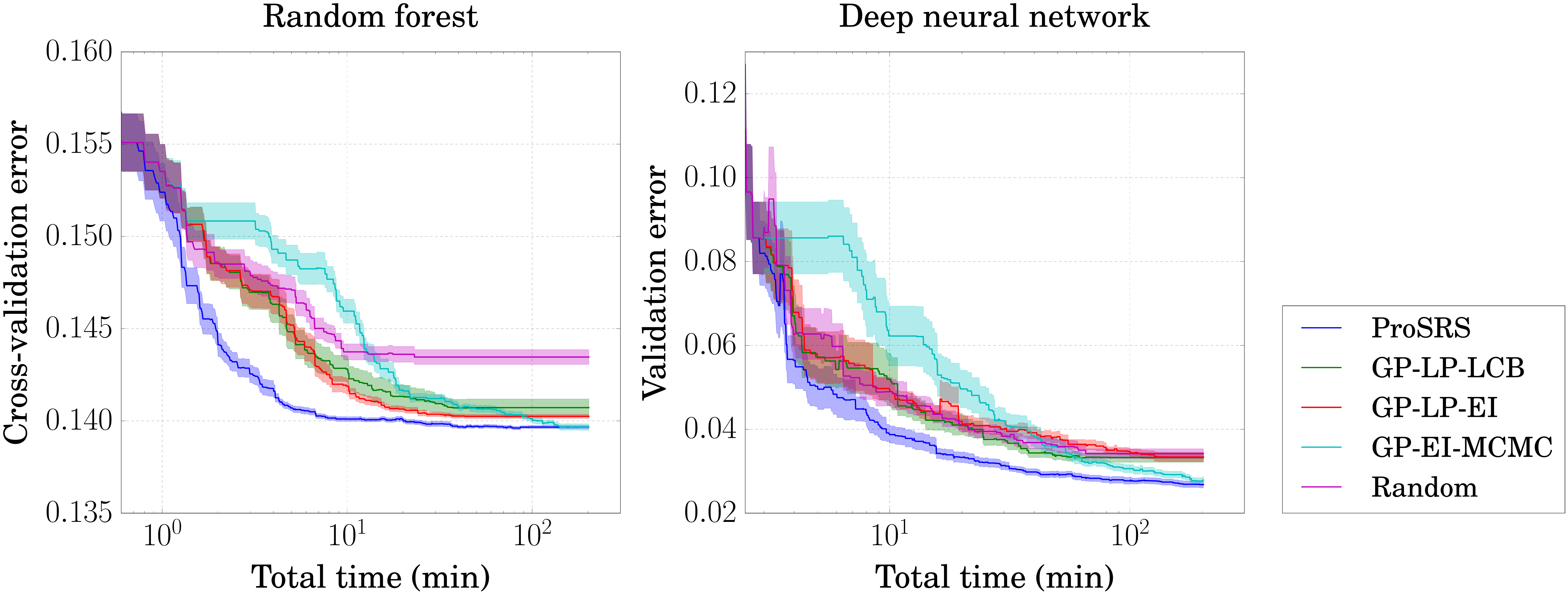}
\caption{Optimization efficiency of different algorithms on the two hyperparameter-tuning problems. Total time on the horizontal axis is the actual elapsed time including both algorithm running time and time of evaluating expensive functions. The shaded areas show the standard deviation of 20 independent runs.}
\label{fig:hyperparameter_optim_efficiency}
\end{center}
\end{figure}

\section{Conclusion}\label{sect:conclusion}

In this paper we introduced a novel parallel surrogate optimization algorithm, ProSRS, for noisy expensive optimization problems. We developed a ``zoom strategy'' for efficiency improvement, a weighted radial basis regression procedure, and a new SRS method combining the two types of candidate points in the original SRS work. We proved an analytical result for our algorithm (Theorem~\ref{thm:converge}): if ProSRS is run for sufficiently long, with probability converging to one there will be at least one point among all the evaluations that will be arbitrarily close to the global minimizer of the objective function.  Numerical experiments show that our algorithm outperforms three current Bayesian optimization algorithms on both optimization benchmark problems and real machine learning hyperparameter-tuning problems. Our algorithm not only shows better optimization performance per iteration but also is orders of magnitude cheaper to run.

\section{Acknowledgments}
This research was supported by NSF CMMI-1150490 and is part of the Blue Waters sustained-petascale computing project, which is supported by the National Science Foundation (awards OCI-0725070 and ACI-1238993) and the state of Illinois. Blue Waters is a joint effort of the University of Illinois at Urbana-Champaign and its National Center for Supercomputing Applications.

\bibliography{refs}


\begin{appendices}

\section{Proof of Theorem~\ref{thm:converge}}\label{sect:converge_proof}

\begin{thm}
Suppose the objective function $F$ in Eq.~\ref{eq:optim_prob} is continuous on the domain $\mathcal{D}\subseteq\mathbb{R}^d$ and $x_\text{opt}$ is the unique minimizer of $F$, characterized by\footnote{Here we adopt the convention that if $\{x\in\mathcal{D}, \lVert x-x_\text{opt} \rVert \geq \eta\}= \emptyset$, then $\inf_{x\in\mathcal{D}, \lVert x-x_\text{opt} \rVert \geq \eta} F(x) = +\infty$.} $F(x_\text{opt}) = \inf_{x\in\mathcal{D}} F(x) \in (-\infty, +\infty)$ and $\inf_{x\in\mathcal{D}, \lVert x-x_\text{opt} \rVert \geq \eta} F(x) > F(x_\text{opt})$ for all $\eta>0$. Let $x_n$ be the point with the minimum objective value among all the evaluated points up to iteration $n$. Then $x_n \xrightarrow{} x_\text{opt}$ almost surely as $n\to\infty$. 
\end{thm}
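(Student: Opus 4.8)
The plan is to reduce the claim $x_n\to x_\text{opt}$ to a much more robust statement — that almost surely the growing set of evaluated points comes arbitrarily close to $x_\text{opt}$ — and then to extract that statement from the global sampling that the algorithm performs infinitely often, namely the Latin hypercube DOE repeated at every restart together with the exploratory candidate points of the SRS step.

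\emph{Step 1: reduction to a hitting statement.} Fix $\epsilon>0$; I claim it suffices to show that almost surely at least one evaluation lands in the ball $B(x_\text{opt},\epsilon)$. Granting this for every $\epsilon=1/k$ and intersecting over $k$, almost surely there is a sequence of evaluated points converging to $x_\text{opt}$; since $F$ is continuous their objective values tend to $F(x_\text{opt})$, and because $x_n$ is the evaluated point of least objective value we get $F(x_\text{opt})\le F(x_n)\le\min\{F(z):z\text{ evaluated through iteration }n\}\to F(x_\text{opt})$, i.e. $F(x_n)\to F(x_\text{opt})$. If $x_n\not\to x_\text{opt}$, some subsequence stays at distance $\ge\eta$ from $x_\text{opt}$, forcing $\liminf F(x_{n_\ell})\ge\inf_{\lVert x-x_\text{opt}\rVert\ge\eta}F(x)>F(x_\text{opt})$, a contradiction. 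So the theorem reduces to the single-ball statement for each fixed $\epsilon$.

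\emph{Step 2: the algorithm samples over all of $\mathcal{D}$ infinitely often.} I would first record two structural facts about the search tree within any inter-restart epoch. The restart test (Eq.~\ref{eq:restart_cond}) together with the $\rho$-contraction of the zoom-in step bounds the tree depth by a constant $k^\ast\approx\log r/\log\rho$ (a depth-$j$ child has side $\le\rho^j(b_i-a_i)$ and contains at least one point, so once $\rho^j<r$ the restart fires instead), and the rule that a new child is created only when $x^\ast$ lies outside every existing child forces the children of a node to be pairwise separated by a fixed fraction of the domain, bounding the branching; hence each epoch uses finitely many nodes. Now split cases. If restarts occur infinitely often, there are infinitely many fresh maximin Latin hypercube DOEs over $\mathcal{D}$. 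If restarts occur only finitely often, the last epoch is infinite, so some node is entered infinitely often; but every visit runs one SRS step and thus strictly increases the number of evaluations in that node's domain, whereas for a non-root node the restart test caps that number, and the zoom-out probability is bounded below by $\beta_\text{min}>0$ so the process cannot remain below the root forever — hence eventually the current node is a fixed root with domain $\mathcal{D}$. Either way there are infinitely many iterations at which a DOE, or an SRS step at the root, samples over all of $\mathcal{D}$; and in the SRS-at-root case the candidate cloud (Type~I uniform on $\mathcal{D}$, or Type~II Gaussian with spread $\sigma\, l_i(\mathcal{D})\ge\sigma_\text{crit}(b_i-a_i)$, clipped to $\mathcal{D}$) has density on $\mathcal{D}$ bounded below by a positive constant depending only on $d,\mathcal{D},\sigma_\text{crit}$.

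\emph{Step 3: conclude by conditional Borel--Cantelli.} At each of these global iterations, conditionally on the past, the probability of placing an evaluation (DOE case) or a candidate point (SRS case) inside $B(x_\text{opt},\epsilon/2)$ is bounded below by a positive constant $\delta_\epsilon$ independent of the iteration — nondegeneracy of maximin LHS in the first case, the density bound of Step~2 in the second. By the conditional (Lévy) form of the second Borel--Cantelli lemma, almost surely infinitely many of these events occur. In the DOE case this already puts an evaluation in $B(x_\text{opt},\epsilon)$. In the SRS case, argue by contradiction: if no evaluation ever enters $B(x_\text{opt},\epsilon)$, then any candidate inside $B(x_\text{opt},\epsilon/2)$ has minimum distance $\ge\epsilon/2$ from all prior evaluations, and then the most exploratory proposed point — the one using the exploration-favoring end of the $[0.3,1]$ weight pattern — is forced by the form of the SRS score to lie at distance at least a fixed positive multiple of $\epsilon$ from every prior evaluation; this would add such a well-separated point infinitely often, impossible in the compact box $\mathcal{D}$. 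Hence $B(x_\text{opt},\epsilon)$ is hit almost surely, supplying exactly the hypothesis of Step~1.

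\emph{Main obstacle.} The hard part is Step~2: taming the coupled zoom-in / zoom-out / restart dynamics enough to guarantee that global sampling recurs. One must turn the resolution restart condition and the zoom-in geometry into genuine finiteness of the tree per epoch, show that accumulation of evaluations in a fixed domain eventually forces a restart, and use the $\beta_\text{min}$ floor to rule out the current node being trapped below the root — and every constant produced along the way must be made uniform in the iteration index so that the conditional Borel--Cantelli step in Step~3 applies.
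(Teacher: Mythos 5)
There is a genuine gap, and it sits in your Step~3, at the hand-off from ``a candidate point lands in $\mathcal{B}(x_\text{opt},\epsilon/2)$'' to ``an evaluated point lands in $\mathcal{B}(x_\text{opt},\epsilon)$.'' The SRS selection rule does not grant this: the score is a convex combination of a normalized response criterion and a normalized distance criterion, and with the response weight as low as $0.3$ the response term can still shift the selected candidate's score by up to $0.3$, i.e.\ by $\tfrac{3}{7}$ of the normalized distance range. Your packing argument needs the selected point to be at least a fixed multiple of $\epsilon$ away from all prior evaluations whenever some candidate is $\epsilon/2$-separated, but the resulting bound is of the form $\epsilon/2 - \tfrac{3}{7}(\Delta_{\max}-\Delta_{\min})$, which is vacuous unless you first control the spread of candidate minimum distances --- and controlling that spread essentially requires already knowing the evaluations are dense, which is circular. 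The paper closes this step with a different and much blunter device: it lower-bounds the probability that \emph{all} $t$ candidate points simultaneously fall inside $\mathcal{B}(x_\text{opt},\delta(\epsilon))\cap\mathcal{D}$ by $\nu(\epsilon)^t>0$, after which the selection mechanism is irrelevant --- whatever the SRS scoring picks, it is a candidate, hence in the ball. That single observation is the missing idea; with it, your Step~3 contradiction argument is unnecessary.

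Your Step~2 is also stated more strongly than you can prove and more strongly than you need. It is not true that ``eventually the current node is a fixed root'': the process keeps zooming in and out indefinitely, so the root is at best visited recurrently, not terminally. What the paper actually establishes is quantitative and local in time: the zoom level is bounded by $z_{\max}=\lceil\log_\rho r\rceil$ (your depth bound is the same computation), so in any window of $\Delta=\max(z_{\max},N_\text{DOE}+1)$ consecutive iterations, conditionally on the entire past, there is probability at least $(\beta_{\min})^{\Delta}\,\nu(\epsilon)^t$ of zooming out to the root (or finishing a restart's DOE) and then having all candidates land in the ball. Multiplying these conditional bounds over disjoint blocks gives $P(\text{no hit in }k\Delta\text{ iterations})\le(1-h(\epsilon))^k\to 0$ directly, with no case split on whether restarts are finite or infinite and no appeal to properties of maximin Latin hypercube sampling (which the paper never needs --- in the restart case it simply waits until the first post-DOE SRS iteration). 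Your Step~1 reduction and the final passage from $F(x_n)\to F(x_\text{opt})$ to $x_n\to x_\text{opt}$ via the separation hypothesis match the paper and are fine.
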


\begin{proof}

We define the zoom level $z$ to be zero for the root node and, whenever zooming in occurs, the zoom level of the child node is one plus that of its parent node so that every node in the tree is associated with a unique zoom level (see Fig.~\ref{fig:tree_diagram}). 

First, we argue that there is an upper bound on the zoom level for ProSRS algorithm. Since after each zoom-in step, the size of the domain is shrunk by at least the zoom-in factor $\rho\in(0,1)$, the domain length for a node of a zoom level $z\in\mathbb{N}$ is upper bounded by $\rho^z(b_i-a_i)$ for each dimension $i=1,2,\dots,d$. Here $a_i$ and $b_i$ are the domain boundaries for the root node (Eq.~\ref{eq:optim_prob}). Now let us consider a node with zoom level $z^* = \lceil \log_\rho r \rceil + 1$, where $r\in(0,1)$ is the prescribed resolution parameter for the restart (see Eq.~\ref{eq:restart_cond}). We further denote the domain length of this node in each dimension to be $\ell_i$ and the number of evaluation points within its domain to be $n$, then we have for all $i=1,2,\dots,d$,
\begin{equation*}
	n^{-\frac{1}{d}}\ell_i \leq \ell_i \leq \rho^{z^*}(b_i-a_i) = \rho^{\lceil \log_\rho r \rceil + 1}(b_i-a_i) < \rho^{\log_\rho r}(b_i-a_i) = r(b_i-a_i),
\end{equation*}
which would satisfy the restart condition (Eq.~\ref{eq:restart_cond}). This implies that the zoom level of ProSRS must be less than $z^*$. In other words, the zoom level is upper bounded by $z_\text{max} = z^*-1 = \lceil \log_\rho r \rceil$.

Now fix some $\epsilon>0$ and define $\Delta \coloneqq \text{max}(z_\text{max}, N_\text{DOE}+1)$, where $N_\text{DOE}$ is the number of iterations for the initial space-filling design. The main idea of the following proof is similar to that in the original SRS paper \citep{regis2007stochastic}.

Since the objective function $F$ is continuous at the unique minimizer $x_\text{opt}$, there exists $\delta(\epsilon)>0$ so that whenever $x$ is within the open ball $\mathcal{B}(x_\text{opt}, \delta(\epsilon))$, $f(x) < f(x_\text{opt})+\epsilon$. 

The probability that a candidate point generated in the root node (of either Type~I or Type~II) is located within the domain $\mathcal{B}(x_\text{opt}, \delta(\epsilon)) \cap \mathcal{D}$ can be shown to be bounded from below by some positive $\nu(\epsilon)$ (see Section 2 of \citet{regis2007stochastic}). Here $\mathcal{D}$ is the domain of the optimization problem (Eq.~\ref{eq:optim_prob}). Since all the candidate points are generated independently, the probability that all the candidate points are within $\mathcal{B}(x_\text{opt}, \delta(\epsilon)) \cap \mathcal{D}$ is greater than or equal to $L(\epsilon) \coloneqq \nu(\epsilon)^t > 0$, where $t$ is a constant denoting the number of candidate points.

Now we define a positive quantity $h(\epsilon) \coloneqq L(\epsilon)(\beta_\text{min})^\Delta$, where $\beta_\text{min}$ is the minimum zoom-out probability (see Section~\ref{sub_sect:zoom}). We further define the event
\begin{align*}
A_i \coloneqq &\{\text{for each of the iterations } (i-1)\Delta+1, (i-1)\Delta+2, \ldots, i\Delta, \text{ there is at least one}\\
 & \quad \text{candidate point that lies outside the domain }\mathcal{B}(x_\text{opt}, \delta(\epsilon)) \cap \mathcal{D}\}, \quad i \in\mathbb{Z}^+.
\end{align*}
Let probability $P_i \coloneqq P(A_i\mid A_1\cap A_2\cap\ldots\cap A_{i-1})$ with the understanding that $P_1 = P(A_1)$. Then
\begin{equation}\label{eq:prob_expression}
	P(A_1\cap A_2\cap\ldots\cap A_k) = \prod_{i=1}^k P_i, \quad k\in\mathbb{Z}^+.
\end{equation}

For now, let us assume $i>1$. For the iteration $(i-1)\Delta$, there are 3 possible events that could happen when we are about to run Line~\ref{pseudo_code:check_zoom_out} of the ProSRS algorithm (Alg.~\ref{alg:ProSRS}): 
\begin{align*}
E_1 &= \{\text{decide to restart}\}, \\
E_2 &= \{\text{decide not to restart and the parent node exists}\},\\
E_3 &= \{\text{decide not to restart and the parent node does not exist}\}.
\end{align*}
Let $z_{i-1}$ be the zoom level of the current node at this moment. Then we have the following inequalities:

\begin{align*}
\begin{split}
		&P(\overline{A_i}\mid A_1\cap A_2\cap\ldots\cap A_{i-1}\cap E_1) \\
	&= P(\text{among iterations }(i-1)\Delta+1, (i-1)\Delta+2, \ldots, i\Delta,\text{there exists one iteration for which}\\
	  &\quad \text{all the candidate points are within domain } \mathcal{B}(x_\text{opt}, \delta(\epsilon)) \cap \mathcal{D} \mid A_1\cap A_2\cap\ldots\cap A_{i-1}\cap E_1)\\
	&\geq P(\text{all the candidate points are within } \mathcal{B}(x_\text{opt}, \delta(\epsilon))\cap \mathcal{D}\text{ for iteration } \big((i-1)\Delta+N_\text{DOE}+1\big)\\
	     &\qquad \mid A_1\cap A_2\cap\ldots\cap A_{i-1}\cap E_1) \geq L(\epsilon) \geq h(\epsilon)
\end{split}\\
\begin{split}
		&P(\overline{A_i}\mid A_1\cap A_2\cap\ldots\cap A_{i-1}\cap E_2) \\
	&\geq P(\text{decide to zoom out for iterations } (i-1)\Delta, (i-1)\Delta+1, \ldots, (i-1)\Delta+z_{i-1}-1 \text{ and}\\
	&\qquad \text{all the candidate points are within } \mathcal{B}(x_\text{opt}, \delta(\epsilon))\cap \mathcal{D}\text{ for the iteration } \big((i-1)\Delta+z_{i-1}\big)\\
	     &\qquad \mid A_1\cap A_2\cap\ldots\cap A_{i-1}\cap E_2) \geq L(\epsilon)(\beta_\text{min})^{z_{i-1}} \geq h(\epsilon) 
\end{split}\\
		&P(\overline{A_i}\mid A_1\cap A_2\cap\ldots\cap A_{i-1}\cap E_3) \\
	&\geq P(\text{all the candidate points are within } \mathcal{B}(x_\text{opt}, \delta(\epsilon))\cap \mathcal{D}\text{ for the iteration } \big((i-1)\Delta+1\big)\\
	     &\qquad \mid A_1\cap A_2\cap\ldots\cap A_{i-1}\cap E_3) \geq L(\epsilon) \geq h(\epsilon).
\end{align*}

That is, for any $i>1$, $P(\overline{A_i}\mid A_1\cap A_2\cap\ldots\cap A_{i-1}\cap E_j) \geq h(\epsilon)$ for all $j = 1,2,3$. Hence, $P(\overline{A_i}\mid A_1\cap A_2\cap\ldots\cap A_{i-1})\geq h(\epsilon)$, which implies $P_i \leq 1-h(\epsilon)$ for any $i>1$. Now if $i = 1$, again we have $P_1 = 1-P(\overline{A_1})\leq 1-h(\epsilon)$ because the probability that all the candidates are within $\mathcal{B}(x_\text{opt}, \delta(\epsilon))\cap \mathcal{D}$ for the iteration $(N_\text{DOE}+1)$ is greater or equal to $h(\epsilon)$. Therefore, $P_i \leq 1-h(\epsilon)$ holds true for all $i\in\mathbb{Z}^+$. Using Eq.~\ref{eq:prob_expression}, we have
\begin{equation}\label{eq:prob_ineq}
	P(A_1\cap A_2\cap\ldots\cap A_k) \leq \big(1-h(\epsilon)\big)^k.
\end{equation}

Since $h(\epsilon)\in(0,1)$, $P(A_1\cap A_2\cap\ldots\cap A_k)$ converges to zero, or equivalently $P(\overline{A_1\cap A_2\cap\ldots\cap A_k})$ converges to one as $k\to\infty$. Observe that
\begin{align*}
&\overline{A_1\cap A_2\cap\ldots\cap A_k} \\
&= \{\text{among iterations }1,2,\ldots,k\Delta, \text{there is an iteration for which all the candidate points}\\
&\qquad \text{are within }\mathcal{B}(x_\text{opt}, \delta(\epsilon))\cap \mathcal{D}\}\\
&\subseteq \{\text{among iterations }1,2,\ldots,k\Delta, \text{there is an evaluated point } x \text{ within }\mathcal{B}(x_\text{opt}, \delta(\epsilon))\cap \mathcal{D}\}\\
&\subseteq \{\text{among iterations }1,2,\ldots,k\Delta, \text{there is an evaluated point } x \text{ such that } f(x)<f(x_\text{opt})+\epsilon\}\\
&\subseteq \{f(x_{k\Delta}) < f(x_\text{opt})+\epsilon\} = \{|f(x_{k\Delta})-f(x_\text{opt})|<\epsilon\}.
\end{align*} 
Hence, $f(x_{k\Delta})$ converges to $f(x_\text{opt})$ in probability as $k\to\infty$. Therefore, there is a subsequence of $\big(f(x_{k\Delta})\big)_{k\in\mathbb{N}}$ which is also a subsequence of $\big(f(x_n)\big)_{n\in\mathbb{N}}$, that converges almost surely to $f(x_\text{opt})$. Because $f(x_n)$ is monotonically decreasing so that the limit always exists, $f(x_n)$ converges to $f(x_\text{opt})$ almost surely. Finally, by the uniqueness of the minimizer, $x_n$ converges to $x_\text{opt}$ almost surely. The arguments for the last two almost-sure convergences are essentially the same as those used in proving the convergence of a simple random search algorithm (see the proof of Theorem 2.1 in \citet{spall2005introduction}). 

\end{proof}

\section{Optimization benchmark functions}\label{sect:benchmark}

Table~\ref{table:benchmark_func} summarizes the benchmark test problems. For each problem, a Gaussian noise was added to the true underlying function. We tested with commonly-used optimization domains, and the standard deviation of the noise roughly matched the range of a function.

\begin{table}
  \caption{Optimization benchmark problems (the last numeric figure in the function name indicates the dimension of the problem)}
  \label{table:benchmark_func}
  \centering
  \begin{tabular}{ccc}
    	\toprule
    Function  &  Optimization Domain &  Std. of Gaussian noise \\
    \midrule
    Ackley10 & $[-32.768,32.768]^{10}$ &  1\\
	Alpine10 & $[-10,10]^{10}$ & 1\\
	Griewank10 & $[-600,600]^{10}$ & 2\\
	Levy10 & $[-10,10]^{10}$ & 1\\
	SumPower10 & $[-1,1]^{10}$ & 0.05\\
	SixHumpCamel2 & $[-3,3]\times[-2,2]$ & 0.1\\
	Schaffer2 & $[-100,100]^2$ & 0.02\\
	Dropwave2 & $[-5.12,5.12]^2$ & 0.02\\
	Goldstein-Price2 & $[-2,2]^2$ & 2\\
	Rastrigin2 & $[-5.12,5.12]^2$ & 0.5\\
	Hartmann6 & $[0,1]^6$ & 0.05\\
	PowerSum4 & $[0,4]^4$ & 1\\
    \bottomrule
  \end{tabular}
\end{table}

\section{Hyperparameter-tuning problems}\label{sect:hyper}

Hyperparameter tuning can be formulated as an optimization problem in Eq.~\ref{eq:optim_prob}. In this case, the function $f$ is a validation or cross-validation error for a machine learning model and the vector $x$ represents the hyperparameters to be tuned. The function $f$ is typically expensive since one evaluation of $f$ involves training and scoring one or multiple machine learning models. The noise associated with $f$ may come from the fact that a machine learning algorithm (e.g., random forest) contains random elements or a stochastic optimization method (e.g., SGD) is invoked during the training process. 

Next, we describe the details of the two hyperparameter-tuning problems considered in this work. For both problems, when tuning an integer-valued hyperparameter, we rounded the continuous output from an optimization algorithm to the nearest integer before feeding it to the machine learning algorithm.

\paragraph{Random forest.} 

We tuned a random forest, one of the most widely used classification algorithms, on the well-known Adult dataset \citep{Dua:2017}. The dataset consists of 48842 instances with 14 attributes, and the task is to classify income based on census information. We tuned 5 hyperparameters: number of trees on $[1,300]$, number of features on $[1,14]$, maximum depth of a tree on $[1,100]$, minimum number of samples for the node split on $[2,1000]$ and minimum number of samples for a leaf node on $[1,1000]$. We minimized the 5-fold cross-validation error.

\paragraph{Deep neural network.} 

We tuned a feedforward deep neural network with 2 hidden layers on the popular MNIST dataset \citep{mnist}. This tuning problem is also considered in \citep{NIPS2017_7111}. We used the same training-validation data split as in the TensorFlow tutorial \citep{tensorflow} with the training set having 55000 data points and the validation set having 5000 data points. We tuned 7 hyperparameters: number of units in each hidden layer on $[1,100]$, $\text{L}_1$ and $\text{L}_2$ regularization constants, both on a log scale on $[10^{-8}, 10^0]$, learning rate on a log scale on $[10^{-4}, 10^0]$, batch size on $[50,1000]$ and number of epochs on $[5,50]$. We minimized the validation error.

\end{appendices}

\end{document}